\documentclass[11pt]{article}
\usepackage[margin=1in]{geometry}
\usepackage{amsmath,amssymb,amsthm,mathtools,amsfonts}
\usepackage[colorlinks=true,linkcolor=blue,citecolor=blue,urlcolor=blue]{hyperref}
\usepackage{authblk}
\usepackage[sort]{cite}
\usepackage{tikz}
\usetikzlibrary{backgrounds,fit,positioning}
\usepackage{graphicx}
\usepackage{tikz}
\usetikzlibrary{backgrounds,calc}

\allowdisplaybreaks
\numberwithin{equation}{section}

\newtheorem{theorem}{Theorem}
\newtheorem{lemma}[theorem]{Lemma}
\newtheorem{corollary}[theorem]{Corollary}

\newtheorem{problem}[theorem]{Problem}
\newtheorem{proposition}[theorem]{Proposition}

\newtheorem{clm}{Claim}

\newcommand \Clm[2]
{
\begin{clm}\label{#1}
#2
\end{clm}
}

\def \proof {\noindent {\it Proof}. }

\newcounter{countclaim}

\newcommand{\claimend}{{\hfill$\natural $}}

\newcommand \aln[2]
{
\begin{align}\label{#1}
#2
\end{align}
}

\title{Real-rooted flow polynomials have only integral roots}

\author[1]{Meiqiao Zhang\thanks{Corresponding author. Email: 
		meiqiaozhang95@163.com and meiqiaozhang@xmu.edu.cn.}}
\author[2]{Fengming Dong\thanks{Email: fengming.dong@nie.edu.sg %(expired on 24/03/2027) 
		and donggraph@163.com.}}

\affil[1]{\small School of Mathematical Sciences, Xiamen  University, China}

\affil[2]{\small
	National Institute of Education,
	Nanyang Technological University, 
	Singapore}

\date{}

\begin{document}

\maketitle

\begin{abstract}
Let $G$ be a connected bridgeless graph. In 2011, Kung and Royle showed that all roots of the flow polynomial $F(G,\lambda)$ of $G$ are integers if and only if $G$ is the dual of a chordal plane graph. In this article, we further prove that if $F(G,\lambda)$ has real roots only, then $G$ is the dual of a chordal plane graph and each root of $F(G,\lambda)$ is an integer in the set $\{1,2,3\}$.
%Let $G$ be a bridgeless graph and let $F(G,\lambda)$ denote the flow polynomial of $G$.In 2011, Kung and Royle showed that all roots of $F(G,\lambda)$ are integers if and only if $G$ is the dual of a chordal plane graph. In this paper, we prove that $F(G,\lambda)$ has only real roots if and only if $F(G,\lambda)$ has only integer roots.Consequently, all roots of $F(G,\lambda)$ are real if and only if $G$ is the dual of a chordal plane graph. Further,by the duality between chromatic polynomials and flow polynomials, it follows immediately that a planar graph has only real chromatic roots if and only if all of its chromatic roots are integers, and every planar graph with only real chromatic roots is chordal.
\end{abstract}

\smallskip
\noindent \textbf{Keywords:} flow polynomial; integral root; chordal graph; planar graph

\smallskip
\noindent \textbf{Mathematics Subject Classification: 05C31, 05C30, 05C21}

\section{Introduction}
All graphs considered in this paper are finite and undirected, and may have loops and parallel edges. For any graph $G$, let $V(G)$ and $E(G)$ be the vertex set and edge set of $G$, respectively. 

In 1912, Birkhoff introduced the chromatic polynomial as a tool to attack the Four-Color Conjecture~\cite{Birk1912} . For any graph $G$, the \textit{chromatic polynomial} $P(G,\lambda)$ of $G$ counts the number of proper $\lambda$-colorings of $G$ for each positive integer $\lambda$. Thus, the Four-Color Conjecture is equivalent to the assertion that $P(G,4)>0$ for all loopless planar graphs $G$.
It is then well known, from the deletion-contraction formula, that this counting function is in fact a polynomial in $\lambda$ with integer coefficients, hence the name. 
%More precisely, $P(G,\lambda)$ is a monic polynomial with integer coefficients and degree $|V(G)|$. Therefore, it can naturally be viewed and studied as a polynomial over the real or complex numbers. 
Over the years, the chromatic polynomial has become one of the fundamental graph polynomials and an important object of study, due to the close connections between its algebraic properties and the structural properties of the underlying graph. See, e.g.,~\cite{Dong2021, Dong2005, Read1988, Royle2009} for background.

In particular, the roots of $P(G,\lambda)$ are called the \textit{chromatic roots} of $G$. Clearly, $0$ is a chromatic root of every loopless graph. 
%Moreover, the location and distribution of chromatic roots have been extensively studied, both on the real line and in the complex plane. For general graphs, there are no real chromatic roots in $(-\infty,0)$, $(0,1)$, or $\left(1,\frac{32}{27}\right]$~\cite{Jackson}, and these are precisely the maximal intervals on the real line that are free of chromatic roots~\cite{Thomassen}. In contrast, chromatic roots are dense in the entire complex plane. Even when restricted to planar graphs, chromatic roots are dense in the complex plane with the possible exception of the disc $|z-1|<1$~\cite{Sokal}.
Also, it can be easily verified that all chromatic roots of a chordal graph are nonnegative integers, where a graph is \textit{chordal} if it is simple and contains no induced cycle of length greater than three.
%Moreover, chordal graphs form a particularly interesting class in this context. A graph is \textit{chordal} if it contains no induced cycles of length greater than three. Due to the perfect elimination ordering of any chordal graph, it is easy to see that all of its chromatic roots are nonnegative integers. 
Meanwhile, there exist non-chordal graphs whose chromatic roots are all integers~\cite{Read,Dong2002,Dong2005,Dong1998,Dmitriev}, as well as non-planar graphs whose chromatic roots are all real but are not all integers~\cite{Dong2018}. 
Motivated by these observations on integral and real chromatic roots, it is natural to consider the following question, which remains widely open.

\begin{problem}[\cite{Dong2018sur}]\label{prob1}
Is there a planar graph that has real chromatic roots only and contains non-integral chromatic roots?
\end{problem}

Clearly, for Problem~\ref{prob1}, it suffices to consider non-chordal planar graphs.

On the other hand, the flow polynomial was introduced by Tutte~\cite{Tutte1947} in 1950 as a natural counterpart of the chromatic polynomial. For any graph $G$, the \textit{flow polynomial} $F(G,\lambda)$ of $G$ counts the number of nowhere-zero $\Gamma$-flows on any orientation $D$ of $G$, where $\Gamma$ is an arbitrary additive abelian group of order $\lambda$, for each positive integer $\lambda$. It is well known that $F(G,\lambda)$ is also a polynomial in $\lambda$ with integer coefficients and is independent of the choice of $\Gamma$, as can be seen directly from the following deletion-contraction formula~\cite{Tutte1984}:
$$
F(G,\lambda)=
\left\{
\begin{array}{@{}ll}
1, 
& \text{if } E(G)=\varnothing,\\
0, 
& \text{if } G \text{ has a bridge},\\
F(G_1,\lambda)F(G_2,\lambda), 
& \text{if } G=G_1\cup G_2,\\
(\lambda-1)F(G-e,\lambda), 
& \text{if } e \text{ is a loop},\\
F(G/e,\lambda)-F(G-e,\lambda), 
\qquad\quad& \text{if } e \text{ is neither a loop nor a bridge},
\end{array}
\right.
$$
where $G/e$ and $G-e$ are the graphs obtained from $G$ by contracting $e$ and deleting $e$ respectively, and $G_1\cup G_2$ is the disjoint union of graphs $G_1$ and $G_2$.

In analogy with chromatic roots,  the roots of $F(G,\lambda)$ are called the \textit{flow roots} of $G$. 
For any connected plane graph $G$, a classical duality due to Tutte~\cite{Tutte1947} shows that 
\aln{dual}
{P(G,\lambda)=\lambda F(G^*,\lambda),}
where $G^*$ is the dual plane graph of $G$. 
Thus, Problem~\ref{prob1} can be equivalently stated in terms of flow roots as follows.
   
\begin{problem}[\cite{Dong2018,Dong2018sur}]\label{prob2}
Is there a planar graph that has real flow roots only and contains non-integral flow roots?
\end{problem}

Problem~\ref{prob2} also remains open. In fact, even the following weaker question is still unsolved.

\begin{problem}[\cite{Dong2018,Dong2018sur}]\label{prob3}
Is there a graph  that has real flow roots only and contains non-integral flow roots?
\end{problem}

When considering Problems~\ref{prob2} and~\ref{prob3}, we restrict our attention to connected bridgeless graphs $G$, as $F(G,\lambda)$ factorizes over the components of $G$ if $G$ is disconnected, and $F(G,\lambda)$ is identically zero if $G$ contains a bridge.
Before introducing our main results, we first review some relevant developments.

In 2011, Kung and Royle characterized the graphs whose flow roots are all integers~\cite{KungRoyle2011}.

\begin{theorem}[\cite{KungRoyle2011}]\label{kung}
If $G$ is a connected bridgeless graph, then its flow roots are integral if and only if $G$ is the dual of a chordal plane graph.
\end{theorem}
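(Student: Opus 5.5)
The statement is the Kung–Royle characterization, so the plan has two directions of unequal difficulty.

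\emph{Sufficiency.} Let $G=H^*$ with $H$ a chordal plane graph. By (\ref{dual}), $F(G,\lambda)=P(H,\lambda)/\lambda$. We then induct on a perfect elimination ordering of $H$. The last simplicial vertex $v$ has a clique neighbourhood of size $k$, so
$$P(H,\lambda)=(\lambda-k)\,P(H-v,\lambda).$$
Every root is therefore a nonnegative integer. Since $H$ is simple, $G$ is loopless and 3-edge-connected, but that plays no role here. This direction is routine.

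\emph{Necessity.} Assume every root of $F(G,\lambda)$ is an integer, with $G$ connected and bridgeless. We induct on $|E(G)|$ and proceed in four steps.

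\textbf{Step 1: locate the roots.} Flow roots lie in $\{1,2,\dots\}$; in fact $F(G,\lambda)$ has no roots in $(-\infty,1)$, and its sign alternates there. So all roots are positive integers and $F(G,\lambda)=\prod_i(\lambda-a_i)$ with each $a_i\ge1$. The degree of $F(G,\lambda)$ is $|E|-|V|+1$. The coefficient of $\lambda^{d-1}$, where $d=|E|-|V|+1$, can be computed (it equals $-|E|$ plus a correction for parallel classes and loops). Matching it against $-\sum_i a_i$ gives a counting identity.

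\textbf{Step 2: reduce to the 3-connected case.} If $G$ has a 2-edge-cut $\{e,f\}$, then
$$F(G,\lambda)=F(G/e,\lambda).$$
If $G$ is a 1-sum or a 2-sum, the flow polynomial factors: for a 2-sum,
$$F(G,\lambda)=\frac{F(G_1,\lambda)F(G_2,\lambda)}{\lambda-1},$$
taken with the appropriate parallel/series conventions. Induction then applies to the pieces, and duals of chordal plane graphs are closed under the corresponding clique-sums (gluing along an edge or vertex of $H$). A loop contributes the factor $(\lambda-1)$, and parallel edges are handled similarly. This leaves $G$ 3-edge-connected and simple-ish.

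\textbf{Step 3: the 3-connected case, and the main obstacle.} Here we need to show that integrality forces $G$ to be the dual of a triangle or a similarly small base. I expect this to be the hard step. The first tool to try is Jackson's result that 3-connected graphs have no flow roots in $(1,2)$, combined with a count of roots at $1$ and $2$. Next, use the multiplicity of the root $1$, which equals the number of blocks. Using this, a 3-connected $G$ should have $F(G,\lambda)$ with a simple root at $1$. The integrality constraint then pushes $F(G,\lambda)$ toward the form $(\lambda-1)\prod(\lambda-a_i)$ with $a_i\ge 2$.

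\textbf{Step 4: conclude.} Compare this with the value $|F(G,0)|$, which counts totally cyclic orientations and is large for 3-connected graphs, and with the degree and coefficient identity from Step 1. The aim is to force $G$ to be small, namely $K_4$ or a bond, and then check planarity and chordality of the dual directly.

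Making the inequality in Step 4 sharp enough to rule out all large 3-connected graphs is where I anticipate the real difficulty. If it fails, the fallback is to exploit that integral flow roots are totally positive: Descartes-type sign conditions on the coefficients of $F(G,\lambda+1)$ would then restrict the structure of $G$.
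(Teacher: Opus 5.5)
The paper does not prove this theorem. It quotes it from Kung and Royle and uses it as a black box; the paper's own contribution is the implication ``real $\Rightarrow$ integral''. Judged on its own, your sufficiency direction is fine, apart from a harmless slip: $G$ need not be loopless, since a bridge of $H$ is a loop of $G$.

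The necessity direction, which is the whole content of the theorem, has a genuine gap. Steps 3--4 contain no argument, and the target they aim at is false.

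\textbf{The reductions in Step 2.} You never decompose along proper $3$-edge-cuts, i.e.\ Lemma~\ref{lem1}(iv), $F(G_1,\lambda)F(G_2,\lambda)=(\lambda-1)(\lambda-2)F(G,\lambda)$. This is the flow-side image of gluing two chordal plane graphs along a separating triangle. Without it, the claim ``$3$-connected with integral flow roots $\Rightarrow$ $K_4$ or a bond'' fails. The triangular prism is $3$-connected and cubic, is the dual of the chordal triangular bipyramid, and has $F=(\lambda-1)(\lambda-2)(\lambda-3)^2$.

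The $2$-sum factorization you do use is false in general. If $G=G_1\oplus_2G_2$ along $p$, then $F(G,\lambda)=F(G_1,\lambda)F(G_2,\lambda)/(\lambda-1)+F(G_1-p,\lambda)F(G_2-p,\lambda)$. For example, the $4$-edge bond is the $2$-sum of two $3$-edge bonds, yet $F=(\lambda-1)(\lambda^2-3\lambda+3)$ rather than $(\lambda-1)(\lambda-2)^2$. The product formula holds only when the second term vanishes, essentially for series connections. So integrality does not pass to the parts of an arbitrary $2$-separation, and the ``convention'' is not yours to choose.

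Undoing the contraction of an edge in a $2$-edge-cut adds to $H$ an edge parallel to an existing one, which is not a clique-sum. Hence ``chordal'' must be read in Kung and Royle's multigraph sense. With the simple-graph definition in the introduction, $G=C_3$ (flow polynomial $\lambda-1$, dual a $3$-edge bond) already violates the statement.

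\textbf{The core case in Steps 3--4.} Even after the right reductions (loops, blocks, $2$-edge-cuts, proper $3$-edge-cuts, as in Claims~\ref{cl4}--\ref{cl5}), the residual class is infinite. The duals of the fans $K_1+P_t$ are $3$-edge-connected, have no proper $3$-edge-cut, and have $F=(\lambda-1)(\lambda-2)^{t-1}$. So no inequality can force $G$ to be small. The tools you list also give nothing:
\begin{itemize}
\item a zero-free interval $(1,2)$ is vacuous once all roots are integers;
\item $|F(G,0)|$ does not count totally cyclic orientations; $|F(G,-1)|=T(G;0,2)$ does.
\end{itemize}

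The missing idea is the second coefficient. Suppose $G$ is $3$-edge-connected with $n\ge3$ and no proper $3$-edge-cut. Its $3$-edge-cuts are then exactly the stars of the $n_3=n-k$ vertices of degree $3$, so $b_{r-2}=\binom m2-n_3$ and $\sum_i\rho_i^2=m+2n_3$. Combined with $\sum_i\rho_i=m$, $m-r=n-1$ and $\rho_1=1$, this gives $\sum_{i\ge2}(\rho_i-1)(\rho_i-2)=2(1-k)$. For integers $\rho_i\ge2$ every term is nonnegative, so $k\le1$.
\begin{itemize}
\item If $k=0$, exactly one root equals $3$ and the others equal $2$. Hence $m=2n-2=3n/2$, so $n=4$ and $G=K_4$.
\item If $k=1$, then $F=(\lambda-1)(\lambda-2)^{n-1}$ and $m=2n-1$. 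The vertex $w$ of degree $>3$ has degree $n+1$, so $G-w$ is connected with $n-2$ edges, i.e.\ a tree. Then $G$ is planar and its dual is a triangulated $(n+1)$-gon, which is chordal.
\end{itemize}
This is the integral counterpart of Lemma~\ref{lem:upper} and of the Cauchy--Schwarz computation behind Lemma~\ref{prop:dong}. An argument of this kind, or Kung and Royle's own, is what your Steps 3--4 would have to supply.
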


Inspired by Theorem~\ref{kung}, it is natural to ask whether every graph whose flow roots are all real must also be the dual of a chordal plane graph~\cite{Dong2018sur}.
%By Theorem~\ref{kung}, it remains to determine for Problems~\ref{prob2} and~\ref{prob3} whether there exists a planar graph or a graph that is not the dual of any chordal plane graph but has only real flow roots.
Moreover, Dong extended Theorem~\ref{kung} by determining all possible flow roots of graphs with only integral flow roots as follows~\cite{Dong2018}.

\iffalse
\begin{theorem}[\cite{Dong2018}]\label{thm:dong}
Let $G$ be a graph with real flow roots only. If some flow roots of $G$ are not in the set $\{1,2,3\}$, then $|V|+17\le |E|< (32|V|-49)/5$ and $G$ has at least 9 flow roots in the interval $(1,2)$.
\end{theorem}
\fi

\begin{theorem}[\cite{Dong2018}]\label{cor:dong}
Let $G$ be a connected bridgeless graph with real flow roots only. The following statements are equivalent:
\begin{enumerate}
\item $G$ is the dual of some chordal plane graph;
\item each flow root of $G$ is in the set $\{1,2,3\}$;
\item $G$ has no flow roots in the interval $(1,2)$.
\end{enumerate}
\end{theorem}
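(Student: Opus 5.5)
The plan is to prove the cycle $1\Rightarrow 2\Rightarrow 3\Rightarrow 1$. Throughout, $G$ is a connected bridgeless graph all of whose flow roots are real.

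\emph{$1\Rightarrow 2$.} Let $G=H^*$ with $H$ a connected chordal plane graph. By (\ref{dual}), $P(H,\lambda)=\lambda F(G,\lambda)$. Fix a perfect elimination ordering of $H$. Then
\[
P(H,\lambda)=\lambda\prod_{i=2}^{n}(\lambda-d_i),
\]
where $d_i\ge 1$ (by connectivity) is the number of earlier neighbours of the $i$-th vertex. These earlier neighbours together with that vertex form a clique. Since $H$ is planar it has no $K_5$, so $d_i\le 3$. Hence every flow root lies in $\{1,2,3\}$.

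\emph{$2\Rightarrow 3$.} This is immediate.

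\emph{$3\Rightarrow 1$.} This is the main obstacle. Assume $F(G,\lambda)$ has only real roots and none in $(1,2)$. By Theorem~\ref{kung}, it suffices to show that every flow root is an integer. The argument I would try uses the real-rootedness to pin down the roots numerically, via known facts about flow polynomials.

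\begin{enumerate}
\item Known root-free intervals. A bridgeless connected graph has no flow roots in $(-\infty,1)$. The only root in $[1,32/27]$ is $1$ (Jackson-type results for flows, via the Tutte polynomial). So all roots are real and at least $1$.
\item Coefficient data. Write $F(G,\lambda)=\sum_k (-1)^{r-k}a_k\lambda^k$ with $r=|E|-|V|+1$. The top coefficients are known combinatorially: $a_r=1$ and $a_{r-1}=|E|$, and there is a formula for $a_{r-2}$ in terms of $|E|$ and short cycles/cocycles.
\item Multiplicities. The multiplicity of $1$ equals the number of blocks, and the behaviour at $2$ is governed by the structure of $G$.
\end{enumerate}

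Suppose all roots lie in $\{1\}\cup[2,\infty)$. Dividing out the factors $(\lambda-1)^{b}$, I would use an inequality between the power sums $\sum(\lambda_i-1)$ and $\sum(\lambda_i-1)^2$, computed from $a_{r-1}$ and $a_{r-2}$, together with the constraint $\lambda_i\ge 2$ for the remaining roots. The aim is to force the number of edges per unit of cyclomatic number to match the dual-chordal equality case, e.g.\ $|E|\le 3(|V|-1)$-type relations dual to planarity.

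If this fails, I would proceed by induction on $|E|$ instead:
\begin{itemize}
\item Reduce to $3$-edge-connected $G$ using multiplicativity over $2$-sums and blocks.
\item For such $G$, show that $F(G,\lambda)$ has a root in $(1,2)$ unless $G$ is a bond-type graph. To do this, evaluate the sign of $F(G,\lambda)$ near $1^+$ and at $2$, using $F(G,2)=1$ or $0$ according to whether $G$ is Eulerian.
\item Argue that a sign change or an odd multiplicity count forces a root in $(1,2)$.
\end{itemize}

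I expect this parity/sign argument on $(1,2)$ for $3$-edge-connected non-dual-chordal graphs to be the hardest step.
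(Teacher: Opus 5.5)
Your $1\Rightarrow 2$ (perfect elimination ordering plus $K_5$-freeness) and $2\Rightarrow 3$ are fine. Reducing $3\Rightarrow 1$ to integrality via Theorem~\ref{kung} is also correct. But $3\Rightarrow 1$, the substantive direction, is not proved: you list strategies without carrying any of them out, and the fallback is aimed at a false statement. $K_4$ is $3$-edge-connected and not a bond, yet $F(K_4,\lambda)=(\lambda-1)(\lambda-2)(\lambda-3)$ has no root in $(1,2)$.

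More fundamentally, the data you propose to use there (the sign of $F$ just above $1$ and the value $F(G,2)$) do not involve real-rootedness, and without real-rootedness the implication is false. $K_{3,3}$ is $3$-edge-connected and $F(K_{3,3},\lambda)=(\lambda-1)(\lambda-2)(\lambda^2-6\lambda+10)$ has no root in $(1,2)$, yet $K_{3,3}$ is not planar. Sign changes also detect only an odd number of roots in $(1,2)$, and $F(G,2)=0$ for every non-Eulerian $G$ (e.g.\ every cubic graph). Your coefficient facts are premature as well: $a_{r-1}=|E|$ fails in general (e.g.\ $F(C_n,\lambda)=\lambda-1$). It holds only once $G$ is $3$-edge-connected (Proposition~\ref{prop:standard}(iii)), so the reductions must come first rather than as a fallback.

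The paper quotes this theorem from \cite{Dong2018} without proof, but its proof of Theorem~\ref{thm:main} shows that your first idea works once executed. In a minimal counterexample, Lemma~\ref{lem1} gives, exactly as in Claims~\ref{cl4}--\ref{cl5}, that $G$ is nonseparable, $3$-edge-connected and has no proper $3$-edge-cut. These reductions preserve real-rootedness and the absence of roots in $(1,2)$, because the smaller graphs have their roots among those of $G$ and $\{1,2\}$.

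Put $r=m-n+1$, $\rho_1=1$ and $x_i=\rho_i-1$ for $2\le i\le r$. Hypothesis~3 gives $x_i\ge 1$, and $b_{r-1}=-m$ gives $\sum x_i=n-1$, so $m\le 2n-1$ at once. (The paper, not assuming 3, gets this from AM--GM and $|F'(G,1)|\ge 1$ in Lemma~\ref{lem:upper}.) In particular $n=2$ forces $F=(\lambda-1)(\lambda-2)$. For $n\ge 3$ the $3$-edge-cuts are exactly the stars of the $n-k$ vertices of degree $3$, so $b_{r-2}=\binom{m}{2}-(n-k)$ and
\[
\sum_{i=2}^{r}x_i=n-1,\qquad \sum_{i=2}^{r}x_i^2=n-2k+1.
\]
Then:
\begin{itemize}
\item $x_i^2\ge x_i$ gives $k\le 1$;
\item $k=1$ forces every $x_i=1$, i.e.\ $F=(\lambda-1)(\lambda-2)^{n-1}$;
\item $k=0$ makes $G$ cubic with $r-1=n/2$, and Cauchy--Schwarz, $(n-1)^2\le \frac{n}{2}(n+1)$, forces $n=4$ and so $G=K_4$.
\end{itemize}
Cauchy--Schwarz on the two sums gives in general $m\ge 2n+2k-3+4(k-1)^2/(n-2k+1)$, essentially the bound \eqref{eq:dong} of Lemma~\ref{prop:dong}; this is exactly where real-rootedness is used. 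Hence all roots are integral, and Theorem~\ref{kung} yields 1. None of these steps appears in your proposal, and the planarity-type edge bound you aim for would not by itself give integrality.
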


In this paper, building on the aforementioned results, we settle Problems~\ref{prob1},~\ref{prob2}, and~\ref{prob3} in the negative.

\begin{theorem}\label{thm:main}
Let $G$ be a connected bridgeless graph.  Then $G$ has only real flow roots if and only if $G$ has only integral flow roots if and only if
$G$ is the dual of a chordal plane graph. Moreover, every flow root of such a graph belongs to $\{1,2,3\}$.
\end{theorem}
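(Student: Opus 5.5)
By Theorem~\ref{cor:dong}, a connected bridgeless graph whose flow roots are all real is the dual of a chordal plane graph exactly when it has no flow roots in $(1,2)$, and then every flow root lies in $\{1,2,3\}$. Theorem~\ref{kung} identifies the duals of chordal plane graphs with the graphs whose flow roots are all integral. So every implication in Theorem~\ref{thm:main} follows once we prove one thing: \emph{if $F(G,\lambda)$ has only real roots, then it has no root in $(1,2)$.} The converse implications are trivial, since integral roots are real. The rest of the plan concerns this single claim.

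\textbf{Reductions.} First reduce to $G$ being $2$-connected and loopless. A loop contributes a factor $\lambda-1$, and $F$ is multiplicative over blocks, so real-rootedness passes to every block. The class of duals of chordal plane graphs is also closed under these block operations, by taking $1$-sums and $2$-sums on the dual side. Second, we may assume $G$ has no vertex of degree $2$: suppressing one only changes $F$ by a series-reduction identity. We may also dispose of parallel edges where possible, since a parallel class of size $k$ behaves like a dual $K_2$ with a multiple edge, that is, like a chordal piece. Let $r=|E|-|V|+1$ be the degree of $F$, and use the known facts that $F(G,\lambda)$ has no real roots in $(-\infty,1)$ and that $1$ is a simple root for $2$-connected $G$.

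\textbf{Main step.} Substitute $\lambda=1+\mu$ and write
\[
Q(\mu)=(-1)^{r}F(G,1-\mu)\ \text{or}\ F(G,1+\mu)=\sum_{k} a_k\mu^{k}.
\]
Under the real-rootedness hypothesis, all roots of $Q$ are real and lie in $[0,\infty)$, with $\mu=0$ a simple root. Hence $Q(\mu)/\mu$ has only positive real roots, and its coefficients satisfy Newton's inequalities. Via the Tutte polynomial identity $F(G,\lambda)=(-1)^{r}T_G(0,1-\lambda)$, the lowest coefficients of $Q$ can be expressed in terms of coefficients of $T_G(0,y)$ near $y=0$, that is, counts of specific spanning subgraphs of $G$.

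The aim is an inequality of the form
\[
\sum_i \frac{1}{\mu_i}\ \ge\ \left(\text{number of nonzero roots}\right)\cdot 1,
\]
equivalently a ratio bound on $a_2/a_1$, together with the top coefficients (the sum of roots is determined by $|E|$ and $|V|$). From these, pin down that the nonzero $\mu_i$ cannot include any value in $(0,1)$. Concretely, one would compare two quantities. The harmonic-type sum $\sum 1/\mu_i = a_2/a_1$ is computable combinatorially. On the other hand, $\sum \mu_i$ and the degree give the arithmetic mean. Equality structure in AM--HM/Newton then forces each $\mu_i\in\{1,2\}$, unless $G$ contains a configuration, such as a $3$-edge-cut not arising from a degree-$3$ vertex, that produces a root in $(1,2)$. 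In that case one derives a contradiction with real-rootedness directly, e.g.\ via a negative discriminant of a quadratic factor coming from a $3$-sum decomposition.

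\textbf{Expected obstacle.} The hard part is the combinatorial evaluation of the low-order coefficients $a_1,a_2$ (and possibly $a_3$), and proving the resulting inequality strictly fails whenever $G$ is not the dual of a chordal plane graph. Theorem~\ref{cor:dong}'s proof in \cite{Dong2018} suggests that numerical bounds alone leave a gap: for instance, graphs with $|E|\ge |V|+17$ having many roots in $(1,2)$. I would therefore first try an induction on $|E|$. Pick a minimal non-chordal-dual structure, dually an induced cycle of length $\ge4$ in $G^*$, equivalently a suitable $4$-edge-cut or a face-type configuration in $G$. Then use deletion--contraction across that cut to show $F(G,\lambda)$ changes sign, or has a non-real pair, in a controlled way on $(1,2)$, which contradicts the interlacing that real-rootedness would force between $F(G/e)$ and $F(G-e)$.
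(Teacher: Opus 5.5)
Your opening reduction matches the paper's: by Theorems~\ref{kung} and~\ref{cor:dong} it suffices to show that real flow roots force integral ones. But the main step is left as the ``expected obstacle'', and the route you sketch cannot close it.

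With $\sum\mu_i=n-1$ fixed, AM--HM only gives a lower bound on $\sum 1/\mu_i$. The inequality you aim for, $\sum 1/\mu_i\ge N$ with $N$ the number of nonzero roots, is again a lower bound. Roots $\mu_i\in(0,1)$ only increase $\sum 1/\mu_i$, so no such bound can exclude them. Also, equality in AM--HM forces all $\mu_i$ to be equal, not to lie in $\{1,2\}$.

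The missing idea is integrality of the linear coefficient. Write $F(G,1+\mu)=\mu R(\mu)$. Then $R$ has integer coefficients and $R(0)=F'(G,1)\neq 0$, so the $N=r-1$ positive roots satisfy $\prod\mu_i=|F'(G,1)|\ge 1$. For $3$-edge-connected $G$ the coefficient of $\lambda^{r-1}$ is exactly $-m$, so $\sum\mu_i=n-1$. AM--GM then gives $n-1\ge r-1$, i.e.\ $m\le 2n-1$. Equality $m=2n-1$ holds only if all $\mu_i=1$, i.e.\ $F(G,\lambda)=(\lambda-1)(\lambda-2)^{n-1}$ (Lemma~\ref{lem:upper}).

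The paper then plays this against Dong's lower bound $m\ge 2n+2k-3+4(k-1)^2/(n-2k)$ (Lemma~\ref{prop:dong}). That bound holds for $3$-edge-connected graphs with no proper $3$-edge-cut, where $k$ is the number of vertices of degree $>3$. Together the two bounds force $k\le 1$. The case $k=1$ is the equality case, and $k=0$ makes $G$ cubic with $(n-2)(n-4)\le 0$, so $G=K_4$. So your remark that numerical bounds alone leave a gap is wrong: they suffice once the integrality of $F'(G,1)$ is used.

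Your reductions also fall short of the setting where these bounds apply, and partly point the wrong way.
\begin{itemize}
\item Suppressing degree-$2$ vertices is not enough. You need full $3$-edge-connectivity, obtained by contracting an edge of any $2$-edge-cut (which leaves $F$ unchanged). That is what makes the $\lambda^{r-1}$ coefficient equal $-m$.
\item You also need to remove proper $3$-edge-cuts via $F(G,\lambda)=F(G_1,\lambda)F(G_2,\lambda)/((\lambda-1)(\lambda-2))$. This passes real-rootedness and any non-integral root to a smaller graph (Lemma~\ref{lem1}).
\end{itemize}
You instead expect such cuts to produce roots in $(1,2)$ via a negative discriminant, and this is false. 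The triangular prism has a proper $3$-edge-cut and $F=(\lambda-1)(\lambda-2)(\lambda-3)^2$, being the dual of the chordal graph $K_5-e$.

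Two further steps are unjustified. Parallel edges cannot simply be discarded: three parallel edges between two vertices give $F=(\lambda-1)(\lambda-2)$, while deleting one leaves $\lambda-1$. And your fallback induction relies on interlacing between $F(G/e,\lambda)$ and $F(G-e,\lambda)$. Real-rootedness of their difference does not imply this, and neither polynomial need even be real-rooted.

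As written, the proposal isolates the core claim but does not prove it.
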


The next corollary then follows from (\ref{dual}).

%\begin{corollary}\label{coro1}
%If $G$ is a bridgeless graph, then its flow roots are real if and only if $G$ is the dual of a chordal plane graph.  In particular, all of its flow roots belong to $\{1,2,3\}$.
%\end{corollary}

\begin{corollary}\label{coro2}
Let $G$ be a loopless planar graph.  Then $G$ has only real chromatic roots if and only if $G$ has only integral chromatic roots if and only if $G$ is chordal.
Moreover, every chromatic root of such a graph belongs to $\{0,1,2,3\}$.
\end{corollary}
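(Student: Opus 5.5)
The corollary should follow from Theorem~\ref{thm:main} via the duality (\ref{dual}). I would first reduce to connected graphs, then transfer the three properties across duality one at a time.

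\emph{Reductions.} If $G=G_1\cup G_2$, then $P(G,\lambda)=P(G_1,\lambda)P(G_2,\lambda)$, and a graph is chordal if and only if each of its components is. Each of the three properties in question therefore holds for $G$ if and only if it holds for every component. So we may assume $G$ is connected.

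Parallel edges change neither $P(G,\lambda)$ nor the induced cycles. I would therefore read ``$G$ is chordal'' as saying that the underlying simple graph of $G$ is chordal, and assume from now on that $G$ is simple. The paper's definition of chordal requires simplicity, so this point should be stated explicitly.

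Now fix a plane embedding of $G$ and let $G^*$ be its dual. Then $G^*$ is connected, and it is bridgeless because $G$ is loopless. By (\ref{dual}) we have $P(G,\lambda)=\lambda F(G^*,\lambda)$. Hence the chromatic roots of $G$ are exactly $0$ together with the flow roots of $G^*$. It follows immediately that $G$ has only real (respectively, only integral) chromatic roots if and only if $G^*$ has only real (respectively, only integral) flow roots. Theorem~\ref{thm:main} applied to $G^*$ then gives three things:
\begin{itemize}
\item real-rootedness and integrality of the chromatic roots are equivalent;
\item both are equivalent to $G^*$ being the dual of a chordal plane graph;
\item every chromatic root of such a $G$ lies in $\{0\}\cup\{1,2,3\}=\{0,1,2,3\}$.
\end{itemize}

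\emph{The main obstacle} is the final link: showing that $G^*$ is the dual of a chordal plane graph if and only if $G$ itself is chordal.
\begin{itemize}
\item If $G$ is chordal, then $G^*$ is by definition the dual of the chordal plane graph $G$.
\item Conversely, suppose $G^*$ is isomorphic, as an abstract graph, to $K^*$ for some chordal plane graph $K$. Then $G$ and $K$ need not be isomorphic. However, their cycle matroids are both the dual of the cycle matroid of $G^*\cong K^*$, so $M(G)\cong M(K)$. By Whitney's $2$-isomorphism theorem, $G$ and $K$ are then $2$-isomorphic.
\end{itemize}
So it suffices to show that chordality of a simple graph depends only on its cycle matroid. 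I would argue this as follows. An edge $e\notin C$ is a chord of a cycle $C$ exactly when both endpoints of $e$ lie on $C$, which is exactly when $e\in \mathrm{cl}_{M}(C)$. Hence an induced cycle of length at least $4$ is the same thing as a circuit $C$ with $|C|\ge 4$ and $\mathrm{cl}_M(C)=C$. Simplicity is also matroidal: it means the matroid has no loops and no parallel pairs. Therefore $G$ is chordal if and only if $K$ is, and the chordal plane graph $K$ being chordal forces $G$ to be chordal. This completes the chain of equivalences.
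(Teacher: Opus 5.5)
Your proposal is correct and follows the paper's route. The paper derives the corollary in one line from the duality (\ref{dual}) together with Theorem~\ref{thm:main}, and the details you add to that deduction are sound and worth having: the reduction to connected graphs (since (\ref{dual}) holds for connected plane graphs), the simplicity caveat (under the paper's literal definition of chordal, a digon with $P=\lambda(\lambda-1)$ would contradict the statement), and the matroid argument that $G^*\cong K^*$ with $K$ chordal gives $M(G)\cong M(K)$ and hence $G$ chordal, where the appeal to Whitney's theorem is not actually needed.
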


We shall present some preliminary results in Section~\ref{sec2} and prove Theorem~\ref{thm:main} in Section~\ref{sec3}.

\section{Preliminaries
\label{sec2}}
In this section, we introduce some notation and  properties of flow polynomials that will be used in the proof of Theorem~\ref{thm:main}.

Let $G$ be a graph. 
%For any proper subset $V_0$ of $V(G)$,let $G-V_0$ denote the graph obtained from $G$ by deleting all the vertices in $V_0$ and all the edges incident with some vertex in $V_0$.  
%Moreover, we simply write $G-v$ for $G-\{v\}$ and  $G-e$  for $G-\{e\}$. 
%For any two nonadjacent vertices $u$ and $v$ in $G$, let $G+uv$ be the graph obtained from $G$ by adding an edge between $u$ and $v$.
%For any $v\in V(G)$, $v$ is called a \textit{cut-vertex} if $G-v$ has more components than $G$ has.
$G$ is said to be \textit{nonseparable} if either $|E(G)|\le |V(G)|=1$ or $G$ is connected without loops and cut-vertices, and \textit{separable} otherwise. A \textit{block} of $G$ is a maximal nonseparable subgraph of $G$.
For any $v\in V(G)$, let $d_G(v)$ denote the degree of $v$ in $G$.
For any $S\subseteq E(G)$, let $G-S$ denote the spanning subgraph of $G$ with edge set $E(G)\setminus S$.  
$S$ is called an \textit{edge-cut} of $G$ if $G-S$ has more  components than $G$. A \textit{bridge} is a 1-edge-cut.
An edge-cut $S$ of $G$ is said to be \textit{proper} if $G-S$ has no isolated vertices. $G$ is \textit{$k$-edge-connected} if $G$ is connected with $|V(G)|\ge 2$ and every edge-cut of $G$ has size at least $k$.

The following lemma indicates that the flow polynomial $F(G,\lambda)$ of a connected bridgeless graph $G$ can be factorized or simplified whenever $G$ contains a loop, $G$ is separable, $G$  contains a 2-edge-cut, 
%$G-e$ is separable for some $e\in E(G)$, 
or $G$ contains a proper 3-edge-cut.
\begin{lemma}[\cite{Jackson,Tutte1984}]\label{lem1}
Let $G$ be a connected bridgeless graph. 
\begin{enumerate}
\item If $G$ contains a loop $e$, then $F(G,\lambda)=(\lambda-1)F(G-e,\lambda).$
%\item If $G$ is disconnected and $G_1,G_2,\dots, G_k$ are the components of $G$, then $$F(G,\lambda)=\prod_{i=1}^kF(G_i,\lambda).$$
\item If $G_1,G_2,\dots, G_b$ are the blocks of $G$, then 
$$F(G,\lambda)=\prod_{i=1}^bF(G_i,\lambda).$$
\item If $e$ is an edge in a 2-edge-cut of $G$, then $F(G,\lambda)=F(G/e,\lambda)$.
%\item Assume that $G$ is connected, $v\in V(G)$, $e=u_1u_2\in E(G)$. If $G-e$ is separable and $H_1,H_2$ are edge-disjoint subgraphs of $G-e$ such that $E(H_1)\cup E(H_2)=E(G-e)$,  $V(H_1)\cap V(H_2)=\{v\}$, $V(H_1)\cup V(H_2)=V(G)$, $u_1\in V(H_1)$,  and $u_2\in V(H_2)$, then $$F(G,\lambda)=\frac{F(G_1,\lambda)F(G_2,\lambda)}{\lambda-1},$$ where $G_i=H_i+vu_i$ for $i=1,2$. An example is as shown in Figure~\ref{fig:G-minus-e-separable}.
%\item \red{Add the case that $S$ is a 2-edge-cut of $G$. Similar to the case that $S$ is a proper $3$-edge-cut.}
\item Assume that $S$ is a proper 3-edge-cut of $G$ whose removal separates $G$ into two subgraphs  $H_1$ and $H_2$. Let $G_i$ be the graph obtained from $G$ by contracting $E(H_{3-i})$ for $i=1,2$.  Then $$F(G,\lambda)=\frac{F(G_1,\lambda)F(G_2,\lambda)}{(\lambda-1)(\lambda-2)}.$$ 
An example is as shown in Figure~\ref{fig:G-three-edge-cut}.
\end{enumerate}
\end{lemma}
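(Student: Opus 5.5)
We prove the four parts separately. For parts 1–3 we work directly with nowhere-zero $\Gamma$-flows for a fixed abelian group $\Gamma$ of order $\lambda$. Agreement at all positive integers $\lambda$ then gives the polynomial identities. Part 4 is the main obstacle.

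For part 1, a loop $e$ contributes nothing to conservation at its vertex. So its value may be any of the $\lambda-1$ nonzero elements, independently of the rest, and the flows of $G$ are exactly the flows of $G-e$ together with this free choice.

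For part 2, take a cut-vertex $v$ and a block $B$ containing it. The net flow out of $v$ along edges of $B$ equals minus the sum of the net flows of $B$-edges at the other vertices of $B$. Those other vertices meet only $B$-edges, so each of these net flows is zero. Hence the restriction of a flow to each block is a flow of that block. Conversely, flows on the blocks glue to a flow on $G$. This bijection gives the product formula; loops are blocks with $F=\lambda-1$, consistent with part 1.

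For part 3, let $\{e,f\}$ be a 2-edge-cut. Since $G$ is bridgeless, $e$ is not a loop, so $G/e$ is defined. Conservation across the cut forces the value on $e$ to be $\pm$ the value on $f$. Given a nowhere-zero flow on $G/e$, conservation at one endpoint of $e$ determines a unique value on $e$, and it is nonzero because it equals $\pm$ the value on $f$. This gives a bijection between the nowhere-zero flows of $G/e$ and those of $G$.

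For part 4, let $S=\{a,b,c\}$, oriented from $H_1$ to $H_2$. Any flow has values $(x,y,z)$ on $S$ with $x+y+z=0$. For nowhere-zero flows, the admissible triples are those with $x\neq 0$, $y\neq 0$ and $x+y\neq 0$, and there are $(\lambda-1)(\lambda-2)$ of them. Let $N_i(x,y,z)$ be the number of nowhere-zero assignments on $E(H_i)$ that, together with the prescribed values on $S$, satisfy conservation at every vertex of $H_i$. Then
\[
F(G,\lambda)=\sum_{(x,y,z)} N_1 N_2 .
\]
In $G_i$ the side $H_{3-i}$ becomes a single vertex of degree $3$, which imposes only $x+y+z=0$. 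Hence $F(G_i,\lambda)=\sum_{(x,y,z)} N_i$.

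The key claim is that $N_i$ is constant on admissible triples. To prove it, view the prescribed values on $S$ as a boundary function $\beta$ on the attachment vertices of $H_i$. Apply inclusion–exclusion over the set $A\subseteq E(H_i)$ of edges forced to be zero:
\[
N_i=\sum_{A\subseteq E(H_i)}(-1)^{|A|}\,\#\{\Gamma\text{-flows on } H_i-A \text{ with boundary } \beta\}.
\]
Each summand is $\lambda^{\text{nullity}(H_i-A)}$ if $\beta$ sums to zero on every component of $H_i-A$, and $0$ otherwise. These component sums are sums of $x,y,z$ over subsets of $S$, grouped by which attachment vertices lie in a common component. For an admissible triple, every sum over a nonempty proper subset of $\{x,y,z\}$ is nonzero, while the full sum is zero. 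So the vanishing pattern, and hence each summand, is the same for every admissible triple.

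Thus $N_i=c_i$ is constant. Then $F(G,\lambda)=c_1c_2(\lambda-1)(\lambda-2)$ and $F(G_i,\lambda)=c_i(\lambda-1)(\lambda-2)$, which yields the stated formula. The properness of the cut is not needed in this counting argument.
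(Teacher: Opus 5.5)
The paper gives no proof of this lemma; it quotes it from Jackson and Tutte, so your argument has to stand on its own. Parts 1, 3 and 4 do, but part 2 contains a false step. You take a cut-vertex $v$ and a block $B$ containing it, and assert that the other vertices of $B$ meet only $B$-edges. This holds only when $B$ is an end block, i.e.\ a block with exactly one cut-vertex.

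For example, let $G$ consist of three triangles $B_1,B_2,B_3$ with $B_1\cap B_2=\{v_1\}$ and $B_2\cap B_3=\{v_2\}$, and take $v=v_1$, $B=B_2$. Then $v_2$ also meets the edges of $B_3$. Conservation at $v_2$ only says that the $B_2$-part and the $B_3$-part of the net flow there cancel. So your argument does not show that the restriction to $B_2$ is a flow.

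The repair is routine, and either of two arguments works:
\begin{itemize}
\item Induct on $b$ by splitting off an end block $B$ with cut-vertex $v$. The restriction to $B$ is a flow by your argument. The restriction to the union of the remaining blocks is then also a flow, since at $v$ its net flow is the total minus the $B$-part.
\item Argue directly for an arbitrary block $B$ and $u\in V(B)$. The component $C_u$ of $G-E(B)$ containing $u$ meets $V(B)$ only in $u$, since a path in $G-E(B)$ joining two vertices of $B$ would contradict the maximality of $B$. Hence the edges with exactly one end in $V(C_u)$ are precisely the non-loop $B$-edges at $u$. Summing conservation over $V(C_u)$ shows that their net flow is $0$.
\end{itemize}

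In part 4, your inclusion--exclusion showing that $N_i$ takes the same value on all $(\lambda-1)(\lambda-2)$ admissible triples is correct and independent of the group. You are also right that properness plays no role in the identity; it only serves, in Claim~\ref{cl5}, to make $G_1,G_2$ smaller than $G$.

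However, you silently use that every edge of $S$ joins $H_1$ to $H_2$ and that $H_1,H_2$ are connected, i.e.\ that $S$ is a minimal edge-cut. Both $x+y+z=0$ and ``$H_{3-i}$ becomes a single vertex of degree $3$'' depend on this. State it explicitly, because the formula can fail without it. Take triangles $pqr$ and $stu$, join them by $ps$ and $qt$, and add an edge $g$ parallel to $pq$. Then $S=\{ps,qt,g\}$ is a proper $3$-edge-cut of this connected bridgeless graph, with $F(G,\lambda)=(\lambda-1)(\lambda-2)(\lambda^2-3\lambda+3)$. But $F(G_1,\lambda)F(G_2,\lambda)/((\lambda-1)(\lambda-2))=(\lambda-1)^2(\lambda^2-3\lambda+3)$.

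In the paper's application $G$ is $3$-edge-connected, so every $3$-edge-cut is minimal. Your proof therefore covers exactly the case that is needed.
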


\begin{figure}[htbp]
\centering
\begin{tikzpicture}[
    scale=0.75,
    transform shape,
    x=1cm,
    y=1cm,
    vertex/.style={circle,draw=black,fill=blue!18,thick,
        minimum size=3mm,inner sep=0pt},
    edge/.style={black,thick},
    boundary/.style={black,thick,rounded corners=12pt},
    hatch/.style={black!65,line width=.35pt,
        dash pattern=on 2pt off 2.2pt},
    graphlabel/.style={font=\Large},
    regionlabel/.style={font=\Large,fill=white,inner sep=2pt}
]

%---------------------------------------------------------------
% The graph G.
%---------------------------------------------------------------
\coordinate (a1) at (1.10,2.95);
\coordinate (a2) at (1.10,1.75);
\coordinate (a3) at (1.10,0.55);
\coordinate (b1) at (5.50,2.95);
\coordinate (b2) at (5.50,1.75);
\coordinate (b3) at (5.50,0.55);

% Uniform dashed filling of both rounded regions.
\begin{scope}[on background layer]
    \begin{scope}
        \clip[rounded corners=12pt] (0,0) rectangle (2.20,3.50);
        \foreach \a in {-3.4,-3.1,...,2.3} {
            \draw[hatch] (\a,3.85) -- ++(3.25,-4.20);
        }
    \end{scope}
    \begin{scope}
        \clip[rounded corners=12pt] (4.40,0) rectangle (6.60,3.50);
        \foreach \a in {1.0,1.3,...,6.7} {
            \draw[hatch] (\a,3.85) -- ++(3.25,-4.20);
        }
    \end{scope}

    % The three edges of the edge-cut.
    \draw[edge] (a1)--(b1);
    \draw[edge] (a2)--(b2);
    \draw[edge] (a3)--(b3);
\end{scope}

% Region boundaries.
\draw[boundary] (0,0) rectangle (2.20,3.50);
\draw[boundary] (4.40,0) rectangle (6.60,3.50);

% Vertices.
\foreach \p in {a1,a2,a3,b1,b2,b3} {
    \node[vertex] at (\p) {};
}

% Labels.
\node[regionlabel] at (1.10,2.35) {$H_1$};
\node[regionlabel] at (5.50,2.35) {$H_2$};
\node[graphlabel] at (3.30,-0.72) {$G$};

%---------------------------------------------------------------
% The graph G_1.
%---------------------------------------------------------------
\begin{scope}[xshift=10cm]
    \coordinate (c1) at (1.10,2.95);
    \coordinate (c2) at (1.10,1.75);
    \coordinate (c3) at (1.10,0.55);
    \coordinate (w)  at (4.05,2.02);

    % Uniform dashed filling of H_1.
    \begin{scope}[on background layer]
        \begin{scope}
            \clip[rounded corners=12pt] (0,0) rectangle (2.20,3.50);
            \foreach \a in {-3.4,-3.1,...,2.3} {
                \draw[hatch] (\a,3.85) -- ++(3.25,-4.20);
            }
        \end{scope}

        \draw[edge] (c1)--(w);
        \draw[edge] (c2)--(w);
        \draw[edge] (c3)--(w);
    \end{scope}

    \draw[boundary] (0,0) rectangle (2.20,3.50);

    \foreach \p in {c1,c2,c3,w} {
        \node[vertex] at (\p) {};
    }

    \node[regionlabel] at (1.10,2.35) {$H_1$};
    \node[graphlabel] at (1.18,-0.72) {$G_1$};
\end{scope}

\end{tikzpicture}
\caption{$G$ has a 3-edge-cut}
\label{fig:G-three-edge-cut}
\end{figure}

Moreover, some standard facts of flow polynomials are as follows.

\begin{proposition}[\cite{Wakelin,KungRoyle2011}]\label{prop:standard}
Let $G$ be a bridgeless graph, where $|V(G)|=n$ and $|E(G)|=m$.
\begin{enumerate}
    \item $F(G,\lambda)$ has no real
    roots in $(-\infty,1)$. 

    \item If $m>0$ and $G$ is nonseparable, then $1$ is a flow root of $G$ with multiplicity one.

    \item If $G$ is $3$-edge-connected, then
\aln{}
{
        F(G,\lambda)=\sum_{i=0}^{m-n+1}b_i\lambda^i,
}
where $b_{m-n+1}=1, b_{m-n}=-m$, and $b_{m-n-1}, b_{m-n-2},\dots,b_0$ are all integers.
\end{enumerate}
\end{proposition}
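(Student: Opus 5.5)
We prove the three parts separately. Parts~1 and~3 come directly from deletion–contraction and the subset expansion of $F$; part~2 uses Crapo's beta invariant.

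Write $\beta(G)=|E(G)|-|V(G)|+c(G)$ for the cycle rank, where $c(G)$ is the number of components.

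\textbf{Part 1.} By induction on $|E(G)|$ we prove the stronger statement that $(-1)^{\beta(G)}F(G,\lambda)>0$ for every real $\lambda<1$ and every bridgeless $G$.
\begin{enumerate}
\item Base case: if $E(G)=\varnothing$, then $F=1$ and $\beta=0$.
\item If $e$ is a loop, then $F(G,\lambda)=(\lambda-1)F(G-e,\lambda)$ and $\beta(G-e)=\beta(G)-1$. Since $\lambda-1<0$, the sign claim for $G$ follows from that for $G-e$.
\item Otherwise use $F(G)=F(G/e)-F(G-e)$. Contracting a non-loop edge of a bridgeless graph keeps it bridgeless, and $\beta(G/e)=\beta(G)$. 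Hence $(-1)^{\beta(G)}F(G/e,\lambda)>0$ by induction.
\item For $G-e$ we have $\beta(G-e)=\beta(G)-1$. Either $G-e$ has a bridge, so $F(G-e)=0$, or induction gives $(-1)^{\beta(G)}\bigl(-F(G-e,\lambda)\bigr)>0$.
\end{enumerate}
So $(-1)^{\beta(G)}F(G,\lambda)$ is a positive term plus a nonnegative term, hence positive.

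\textbf{Part 2.} Since $m>0$, no nowhere-zero flow exists over the trivial group, so $F(G,1)=0$. To show the root is simple, I would write $F(G,\lambda)=(-1)^{\beta(G)}T(G;0,1-\lambda)$, where $T$ is the Tutte polynomial. Then $\lambda=1$ corresponds to $y=0$. Expanding $T(G;0,y)=\sum_j t_{0j}y^j$, we have $t_{00}=0$, and the linear coefficient $t_{01}$ equals Crapo's beta invariant when $m\ge 2$. Crapo's theorem says this invariant is positive exactly when $G$ is nonseparable (loopless). The remaining case is $m=1$, where $G$ is a single loop and $F=\lambda-1$.

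If I want to avoid citing Crapo, I would instead induct on nonseparable graphs. The key fact is that a $2$-connected graph has an edge $e$ with $G-e$ or $G/e$ nonseparable, which follows from an ear decomposition. I would then track the sign of $F(G,\lambda)/(\lambda-1)$ at $\lambda=1$, as in Part~1. This is the main obstacle, because separable minors contribute factors $(\lambda-1)^2$ by Lemma~\ref{lem1}(2), and one must check that they do not cancel the main term.

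\textbf{Part 3.} Use the subset expansion
\[
F(G,\lambda)=\sum_{A\subseteq E}(-1)^{|E\setminus A|}\lambda^{|A|-n+c(A)}.
\]
This immediately gives integer coefficients. Take $A=E\setminus K$ with $|K|=k$ and $c(A)=c$; the exponent is $m-n-k+c$.
\begin{enumerate}
\item $K=\varnothing$ gives the leading term $\lambda^{m-n+1}$ with coefficient $1$.
\item $k=1$: every $G-e$ is connected, so each of these $m$ subsets contributes $-\lambda^{m-n}$.
\item $c=1$ and $k\ge 2$: the exponent is at most $m-n-1$.
\item $c\ge 2$: each component of $(V,A)$ is joined to the rest by at least $3$ edges of $K$, so $k\ge 3c/2$. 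Then the exponent is $m-n+c-k\le m-n-c/2\le m-n-1$.
\end{enumerate}
Hence $b_{m-n+1}=1$ and $b_{m-n}=-m$.
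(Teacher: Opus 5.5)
The paper gives no proof of this proposition; it is quoted from Wakelin and Kung--Royle, so there is no argument to compare against. Your reconstruction is correct and follows the standard lines behind those references.

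\begin{itemize}
\item In Part~1, the strengthened statement $(-1)^{\beta(G)}F(G,\lambda)>0$ is exactly what makes the deletion--contraction step close. It also covers disconnected $G$, which the hypothesis allows.
\item In Part~3, the bound $2k\ge 3c$ is precisely where $3$-edge-connectivity enters. Each component's coboundary is an edge-cut of size at least $3$ contained in $K$. By contrast, a $2$-edge-cut $K$ would give $c=k=2$ and an extra term of degree $m-n$.
\item Part~2 is complete: it uses $F(G,\lambda)=(-1)^{\beta(G)}T(G;0,1-\lambda)$, $t_{00}=0$, $t_{01}=\beta(M(G))$ for $m\ge 2$, and Crapo's theorem. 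It rests on cited matroid results, just as the paper's citation does.
\end{itemize}

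Your elementary fallback for Part~2 also works, and the cancellation you worry about cannot happen. Induct on the claim $(-1)^{\beta(H)+1}F'(H,1)>0$ for every nonseparable bridgeless $H$ with at least one edge. Use $F'(G,1)=F'(G/e,1)-F'(G-e,1)$, with the loop and the digon as base cases.

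\begin{itemize}
\item A minor with a bridge has $F\equiv 0$, so it contributes nothing.
\item A connected bridgeless separable minor has at least two blocks containing edges, and each such block vanishes at $1$. By Lemma~\ref{lem1}(ii) the minor has $(\lambda-1)^2$ as a factor, so it contributes $0$ to the derivative rather than cancelling anything.
\item Each nonseparable minor contributes a term of the same sign $(-1)^{\beta(G)+1}$. For $G/e$ this holds because $\beta(G/e)=\beta(G)$. For $G-e$, the drop $\beta(G-e)=\beta(G)-1$ is compensated by the minus sign. This is the same parity bookkeeping as in your Part~1.
\end{itemize}

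It remains to find an edge $e$ for which $G-e$ or $G/e$ is nonseparable. Tutte's theorem supplies one: for a connected matroid, one of $M\setminus e$, $M/e$ is connected. Alternatively, take an edge of the last ear in an ear decomposition. With such an $e$, the induction closes without Crapo's theorem.
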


We establish our key lemma below.

\begin{lemma}\label{lem:upper}
Let $G$ be a nonseparable $3$-edge-connected graph with $|V(G)|=n$ and $|E(G)|=m$. 
If  $G$ has only real flow roots, 
then $m\leq 2n-1$.
Moreover, if $m=2n-1$, then
$F(G,\lambda )=(\lambda -1)(\lambda -2)^{n-1}.$
\end{lemma}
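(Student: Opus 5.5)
We argue from the coefficient information in Proposition~\ref{prop:standard}. Set $d=m-n+1$, the degree of $F(G,\lambda)$, and suppose every flow root of $G$ is real. Since $G$ has at least one edge and is nonseparable, part 2 of Proposition~\ref{prop:standard} says that $1$ is a simple root. Part 1 says there are no roots below $1$. So the roots are $1=r_1<r_2\le\dots\le r_d$, with $r_i>1$ for $i\ge 2$.

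Because $G$ is 3-edge-connected, part 3 gives the sum of the roots as $\sum_i r_i=m$. Hence
$$\sum_{i\ge2}r_i=m-1 .$$
The plan is to bound each $r_i$ from below by $2$ for $i\ge 2$, which would give $m-1\ge 2(d-1)=2(m-n)$, that is, $m\le 2n-1$. If $m=2n-1$, equality forces $r_i=2$ for all $i\ge2$, and then $F(G,\lambda)=(\lambda-1)(\lambda-2)^{n-1}$, since $d-1=m-n=n-1$.

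The crux is therefore to show that a 3-edge-connected nonseparable graph with only real flow roots has no flow roots in $(1,2)$. Merely exceeding $1$ is not enough for the bound. A direct route is unavailable: Theorem~\ref{cor:dong} already states that ``no roots in $(1,2)$'' is equivalent to being the dual of a chordal plane graph, and that is essentially the main theorem, so we cannot simply invoke it.

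We would therefore look for a second symmetric-function constraint instead of a root-location result. Let $s=m-1$ be the sum and $q=\sum_{i\ge 2}(r_i-1)^2$. Using the coefficient $b_{m-n-1}$, which Proposition~\ref{prop:standard} leaves unspecified, we would try to prove a combinatorial bound on it: $b_{m-n-1}$ counts, up to sign, pairs of edges together with contributions from 2-edge subsets that contain cycles; 3-edge-connectivity forbids 2-edge-cuts. The aim is to express $\sum r_i^2=m^2-2b_{m-n-1}$ and obtain an upper bound on the variance of the $r_i$ for $i\ge 2$. Combined with reality (nonnegativity of the variance) and the fact that $F(G,2)\ne 0$ has a sign determined by parity (the number of nowhere-zero $\mathbb{Z}_2$-flows, i.e.\ even-degree structure), this should yield the inequality $\sum_{i\ge2}(r_i-2)\ge 0$.

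An alternative, possibly cleaner, route is to evaluate at $\lambda=2$ and use Newton-type or Laguerre inequalities for real-rooted polynomials. For example, real-rootedness gives $(d-1)\sum r_i^2\ge(\sum r_i)^2$-type bounds relating $m$, $n$ and $b_{m-n-1}$, and these may directly give $m\le 2n-1$ once $b_{m-n-1}$ is computed combinatorially as $\binom m2-(\text{number of 2-cycles})$ or similar. We would try this first, since it avoids root location entirely. The main obstacle is obtaining the exact or bounded form of $b_{m-n-1}$ for 3-edge-connected graphs, and checking that the resulting inequality is strong enough, including in the equality case.
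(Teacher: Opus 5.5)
Your reduction is correct as far as it goes. With roots $1=r_1<r_2\le\dots\le r_d$ and $\sum_i r_i=m$, the lemma is equivalent to $\sum_{i\ge2}(r_i-1)=n-1\ge d-1$, and your equality-case reasoning would also be fine. The gap is that the proposal never proves this inequality.

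\textbf{Why your routes fail.} Your main plan, $r_i\ge2$ for all $i\ge2$, is by Theorem~\ref{cor:dong} essentially the main theorem, as you note yourself. In the paper that theorem is deduced \emph{from} this lemma, so the plan is circular.

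The fallback routes cannot work either. Put $x_i=r_i-1>0$. One can compute $b_{m-n-1}$ exactly for $3$-edge-connected graphs: it is $\binom m2-t_3$, where $t_3$ is the number of $3$-edge-cuts (not $2$-cycles). This gives $\sum_{i\ge2}x_i^2=2t_3-(n-1)$. But knowing $\sum x_i$ and $\sum x_i^2$ can never bound the \emph{number} $d-1$ of terms from above. One large $x_i$ together with arbitrarily many tiny positive ones matches the same pair of power sums. Cauchy--Schwarz and Newton-type inequalities with a fixed sum give lower bounds on the number of terms, not upper bounds. Your remark about $F(G,2)$ is also off: $F(G,2)$ equals $1$ if every degree is even and $0$ otherwise. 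It vanishes for every cubic graph, including the equality case itself, so it gives no sign information.

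\textbf{The missing idea.} You need an integrality bound that stops the $x_i$ from being collectively small. Write $F(G,\lambda)=(\lambda-1)Q(\lambda)$. Then $Q(1)=F'(G,1)=\sum_i ib_i$ is an integer, and it is nonzero because $1$ is a simple root. Hence $\prod_{i\ge2}(r_i-1)=|Q(1)|\ge1$, and AM--GM gives
\[
n-1=\sum_{i\ge2}(r_i-1)\ \ge\ (d-1)\Bigl(\prod_{i\ge2}(r_i-1)\Bigr)^{1/(d-1)}\ \ge\ d-1 ,
\]
that is, $m\le 2n-1$. If $m=2n-1$, both inequalities are equalities, so all $r_i-1$ with $i\ge2$ are equal. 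Since there are $n-1$ of them summing to $n-1$, each $r_i=2$.

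This is the paper's argument. It only needs the \emph{geometric} mean of the $r_i-1$ to be at least $1$, which integrality supplies for free, rather than the much stronger root-location statement $r_i\ge2$.
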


\begin{proof}
Since $G$ is a nonseparable $3$-edge-connected graph, 
we have $n\ge 2$ and the degree of every vertex in $G$ is at least three. Then $2m\ge 3n\ge 2n+2$, which implies that $m\ge n+1$.

Let $r=m-n+1$. Then $r\ge 2$. By Proposition~\ref{prop:standard} (iii), we may write
$$F(G,\lambda)=\sum_{i=0}^{r}b_i\lambda^i,$$ 
where $b_r=1$, $b_{r-1}=-m$, and $b_{r-2}, b_{r-3},\dots,b_0$ are all integers.

Since $F(G,\lambda)$ is a polynomial of degree $r\ge 2$, let 
$\rho_1,\rho_2,\ldots,\rho_r$ be the roots of $F(G,\lambda)$, counted with multiplicity. Then by Proposition~\ref{prop:standard} (i) and (ii), we can further assume  that $\rho_1=1$, and $\rho_i>1$ for $2\leq i\leq r$. Clearly, $\sum_{i=1}^{r}\rho_i=-b_{r-1}=m$, implying that
\aln{eq1-1}{
    n-1=m-r=\sum_{i=1}^{r}\rho_i-r=\sum_{i=1}^{r}(\rho_i-1)=\sum_{i=2}^{r}(\rho_i-1).
}

Let $Q(\lambda)
=\prod\limits_{i=2}^{r}(\lambda-\rho_i)$. Then $Q(1)\neq 0$ and $F(G,\lambda)=(\lambda-1)Q(\lambda)$.
Further,
$$
\left\{
\begin{aligned}
&F'(G,\lambda)=\sum_{i=1}^{r}ib_i\lambda^{i-1}; \mbox{ and}\\
&F'(G,\lambda)=Q(\lambda)+(\lambda-1)Q'(\lambda)=\prod_{i=2}^{r}(\lambda-\rho_i)+(\lambda-1)Q'(\lambda).\\
\end{aligned}
\right.
$$
Then it is clear that $
F'(G,1)=\sum\limits_{i=1}^{r}ib_i$ while
$F'(G,1)=Q(1)\neq 0$, which implies that $F'(G,1)$ is a nonzero integer. Moreover, since $\rho_i-1>0$ for $2\le i\le r$, we have $$\prod_{i=2}^{r}(\rho_i-1)=|Q(1)|=|F'(G,1)|
=\left |\sum_{i=1}^{r}ib_i\right |
\ge 1.
$$
Then by (\ref{eq1-1}) and the 
AM-GM inequality,
\aln{ineq1}
{
n-1&=\sum_{i=2}^{r}(\rho_i-1)
\geq (r-1)
        \left(\prod_{i=2}^{r}(\rho_i-1)\right)^{1/(r-1)}
        \geq r-1,
}
where the equality in the first inequality holds if and only if $\rho_i$ are all equal for  $2\le i\le r$.
Hence $r\leq n$, which is equivalent to $m\leq2n-1$.

Moreover, for the case $m=2n-1$, we have $r=n$ and all the equalities in (\ref{ineq1}) hold. Therefore, $\rho_i$ are all equal for $2\le i\le r$. Then by (\ref{eq1-1}), 
$$n-1=\sum_{i=2}^{r}(\rho_i-1)=\sum_{i=2}^{n}(\rho_i-1),$$ 
which implies that $\rho_i=2$ for all $2\le i\le r$.
Hence $F(G,\lambda)=(\lambda-1)(\lambda-2)^{n-1}$.
\qed
\end{proof}

\section{Proof of Theorem~\ref{thm:main}
\label{sec3}}
In this section, we shall use the following result, which appears as Lemma 15 in~\cite{Dong2018} , to prove Theorem~\ref{thm:main}.

\begin{lemma}[\cite{Dong2018}]\label{prop:dong}
Suppose that $G$ is a $3$-edge-connected graph  with $|V(G)|=n$ and $|E(G)|=m$ such that $G$ has only real flow roots. Let $k=\left|\{v\in V(G):d_G(v)>3\}\right|$. If $n\ge 3$ and
$G$ has no proper $3$-edge-cut,
then $n\geq 2k+1$,
and
\begin{equation}\label{eq:dong}
    m\geq 2n+2k-3+\frac{4(k-1)^2}{n-2k}.
\end{equation}
\end{lemma}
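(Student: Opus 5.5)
The plan is to read off the first three coefficients of $F(G,\lambda)$ from the subgraph expansion. Then, as in the proof of Lemma~\ref{lem:upper}, I will combine them with the fact that all roots are real, one equals $1$, and the rest exceed $1$. I use the expansion
$$F(G,\lambda)=\sum_{A\subseteq E(G)}(-1)^{m-|A|}\lambda^{|A|-n+c(A)},$$
where $c(A)$ is the number of components of the spanning subgraph $(V(G),A)$. Put $r=m-n+1$.

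\textbf{Step 1: the coefficient $b_{r-2}$.} Deleting one edge lowers the nullity by one, which recovers $b_{r-1}=-m$. Deleting two edges never raises the number of components, since $G$ has no $2$-edge-cut, so every $2$-subset contributes $+1$ to $b_{r-2}$. A $3$-subset lowers the nullity by only two exactly when it is a $3$-edge-cut. Since $G$ has no proper $3$-edge-cut, these are exactly the edge sets at the $n-k$ vertices of degree $3$, each contributing $-1$. Larger subsets lower the nullity by at least three. Hence
$$b_{r-2}=\binom m2-(n-k).$$

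\textbf{Step 2: the first two power sums.} Let $\rho_1=1$ and $\rho_2,\dots,\rho_r>1$ be the roots, using Proposition~\ref{prop:standard}, and set $x_i=\rho_i-1$. Then $\sum_i\rho_i^2=m^2-2b_{r-2}=m+2(n-k)$. Also $\sum_{i\ge2}x_i=n-1$ exactly as in (\ref{eq1-1}). A short computation gives
$$\sum_{i=2}^r x_i^2=\sum_{i=1}^r\rho_i^2-2m+r=n-2k+1.$$

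\textbf{Step 3: Cauchy--Schwarz.} Reality of the roots enters through
$$(n-1)^2=\Bigl(\sum_{i\ge 2} x_i\Bigr)^2\le (r-1)\sum_{i\ge2}x_i^2=(m-n)(n-2k+1).$$
Writing $n-1=(n-2k+1)+2(k-1)$ and expanding turns this into
$$m\ge 2n+2k-3+\frac{4(k-1)^2}{n-2k+1}.$$
Positivity of $\sum x_i^2$ gives $n\ge 2k$.

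\textbf{Main obstacle.} The argument above produces denominator $n-2k+1$ and only $n\ge 2k$. The statement needs denominator $n-2k$ and $n\ge 2k+1$, so it is off by one. To close the gap I would exploit integrality, as in Lemma~\ref{lem:upper}: $\prod_{i\ge2}x_i=|F'(G,1)|$ is a positive integer. I would try to combine this with the two power-sum identities to rule out the boundary configurations. I would also consider whether one of the $x_i$ can be forced to be at least $1$: there are $n-1$ total mass but only $n-2k+1$ of squared mass, so many $x_i$ must be small while their product is at least $1$. The aim is to split off that contribution and apply Cauchy--Schwarz to the remaining $r-2$ variables, which should shift the denominator by one.

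For $n\ge 2k+1$, the case $n=2k$ gives $\sum x_i^2=1$ with $\sum x_i=n-1\ge2$ and $\prod x_i\ge1$. I expect this to contradict AM--GM together with the bound $\max x_i\le1$, because every $x_i\le 1$ forces $\prod x_i\le 1$ with equality only if all $x_i=1$, and then $\sum x_i^2=r-1=1$ contradicts $\sum x_i\ge2$. Making this sharpening work in general is the step I expect to be hardest.
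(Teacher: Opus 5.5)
Your Steps 1--3 are correct. The values $b_{r-2}=\binom m2-(n-k)$, $\sum_{i\ge2}x_i=n-1$, $\sum_{i\ge2}x_i^2=n-2k+1$, and the resulting bound with denominator $n-2k+1$ all check out. So does your exclusion of $n=2k$, for nonseparable $G$.

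\textbf{The gap.} The proposal does not prove the statement. Your inequality is strictly weaker than \eqref{eq:dong} whenever $k\neq 1$, and the repair is only a sketch that, as described, does not work in general. From $\prod_{i\ge2}x_i\ge1$ you only learn that some $x_j=u\ge1$. Splitting it off gives $m-n-1\ge (n-1-u)^2/(n-2k+1-u^2)$, and the $u$-derivative of the right side has the sign of $u(n-1)-(n-2k+1)$.
\begin{itemize}
\item For $k\ge1$ this is nonnegative for $u\ge1$, so the worst case is $u=1$ and you would recover \eqref{eq:dong}.
\item For $k=0$ the equality case of your Cauchy--Schwarz step already has every $x_i=\frac{n+1}{n-1}>1$. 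So knowing that some $x_i\ge 1$, or that the product is at least $1$, does not move the denominator from $n+1$ to $n$.
\end{itemize}

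\textbf{The missing idea.} The number $2$ is itself a flow root, which pins one $x_j$ to be \emph{exactly} $1$.
\begin{itemize}
\item Your identity gives $\sum_{i\ge2}x_i^2=n-2k+1>0$, since $\sum_{i\ge2}x_i=n-1>0$. Hence $k\le n/2<n$, so $G$ has a vertex of degree exactly $3$.
\item A graph with a vertex of odd degree has no nowhere-zero $\mathbb{Z}_2$-flow, so $F(G,2)=0$.
\item Taking $\rho_2=2$ leaves $\sum_{i\ge3}x_i=n-2\ge1$ and $\sum_{i\ge3}x_i^2=n-2k$.
\item Since some $x_i$ with $i\ge3$ is nonzero, $n-2k>0$, i.e.\ $n\ge 2k+1$.
\item Cauchy--Schwarz over the $r-2=m-n-1$ remaining roots gives $(n-2)^2\le (m-n-1)(n-2k)$. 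Writing $n-2=(n-2k)+2(k-1)$ and expanding yields exactly \eqref{eq:dong}.
\end{itemize}
The paper imports this lemma from earlier work without proof; the denominator $n-2k$ is exactly what this step produces.

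This route also removes your reliance on $|F'(G,1)|\ge1$ and on $\rho_i>1$ for $i\ge2$. Both require $G$ to be nonseparable, which is not a hypothesis of the lemma, whereas Cauchy--Schwarz only needs $x_i\ge0$.
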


\noindent\textit{Proof of Theorem~\ref{thm:main}.}
By Theorems~\ref{kung} and~\ref{cor:dong},
it suffices to show that for any connected bridgeless graph $G$, if $G$ has only real flow roots, then $G$ has only integral flow roots. 

Suppose that $G$ is a counterexample to the above statement with the minimum number of edges. Then $G$ is connected and bridgeless, and $G$ has only real flow roots and contains a non-integral  flow root $x$. Let $n=|V(G)|$, $m=|E(G)|$, and $k=\left|\{v\in V(G):d_G(v)>3\}\right|$. We will derive a contradiction through the following series of claims.

\Clm{cl4}{$G$ is loopless.}

\proof
Assume that $G$ contains a loop $e$. Since $G$ is connected and bridgeless, $G-e$ is also connected and bridgeless. Moreover, by Lemma~\ref{lem1} (i), $$F(G,\lambda)=(\lambda-1)F(G-e,\lambda),$$
which implies that $G-e$ has only real flow roots and contains the non-integral flow root  $x$ with $|E(G-e)|<|E(G)|$, contradicting the minimality of $G$.
Hence $G$ is loopless.
\claimend

\iffalse
\Clm{cl1}{$G$ is connected.}

\proof
Assume that $G$ is disconnected with components $G_1,G_2,\dots,G_c$, where $c\ge 2$ and $|E(G_i)|>0$ for all $1\le i\le c$. Then by Lemma~\ref{lem1} (ii), each $G_i$ has only real flow roots and some $G_i$ has the non-integral flow root  $x$, where each $G_i$ contains fewer edges than $G$,  contradicting the minimality of $G$.
Hence $G$ is connected.
\claimend
\fi

\Clm{cl2}{$G$ is nonseparable with $n\ge 2$.}

\proof
If $n=1$, then $m=0$ due to Claim~\ref{cl4}. As a result, $F(G,\lambda)=1$, contradicting the assumption of $G$. Thus $n\ge 2$. Then by Claim~\ref{cl4}, it remains to show that $G$ contains no cut-vertices.

Assume that $G$ contains cut-vertices. Then $G$ has blocks $G_1,G_2,\dots,G_b$, where $b\ge 2$ and $0<|E(G_i)|<|E(G)|$ for all $1\le i\le b$. 
Since $G$ is bridgeless, each $G_i$ is also  bridgeless.
Then by Lemma~\ref{lem1} (ii), 
$$F(G,\lambda)=\prod_{i=1}^bF(G_i,\lambda),$$
which implies that each $G_i$ has only real flow roots and some $G_i$ has the non-integral flow root  $x$, where each $G_i$ contains fewer edges than $G$, contradicting the minimality of $G$.
Hence Claim~\ref{cl2} holds.
\claimend

\Clm{cl3}{$G$ is $3$-edge-connected.}

\proof
By Claim~\ref{cl2}, it remains to show that $G$ contains no edge-cut of size two.

Assume that $G$ contains a 2-edge-cut $S$, where $e\in S$. Since $G$ is connected, $G/e$ is clearly connected. Moreover, by Lemma~\ref{lem1} (iii), $$F(G/e,\lambda)=F(G,\lambda)\not\equiv 0,$$ 
which implies that $G/e$ is bridgeless.
Also, $G/e$ has only real flow roots and $G/e$  contains the non-integral flow root $x$ with $|E(G/e)|<|E(G)|$, contradicting the minimality of $G$.
Hence Claim~\ref{cl3} holds.
\claimend

\Clm{cl5}{$G$ has no proper $3$-edge-cut.}

\proof
Assume that $G$ has a proper $3$-edge-cut $S$. Then the removal of $S$ separates $G$ into two subgraphs $H_1$ and $H_2$, where $|E(H_1)|,|E(H_2)|>0$. Also, by Claim~\ref{cl3}, $S$ is a minimal edge-cut, implying that $H_1,H_2$ are both connected.

Let $G_i$ be the graph obtained from $G$ by contracting $E(H_{3-i})$ for $i=1,2$. Then each $G_i$ is connected and contains fewer edges than $G$. Moreover, by Lemma~\ref{lem1} (iv), $$F(G_1,\lambda)F(G_2,\lambda)=F(G,\lambda)(\lambda-1)(\lambda-2)\not\equiv 0,$$
which implies that $G_1,G_2$ are bridgeless, $G_1$, $G_2$ have only real flow roots, and at least one of $G_1$, $G_2$  has the non-integral flow root $x$,  contradicting the minimality of $G$.
Hence Claim~\ref{cl5} holds.
\claimend

\Clm{cl6}{$n\ge 4$.}

\proof
Note that the minimum degree of $G$ is at least three as $G$ is 3-edge-connected.

Suppose $n=2$. 
Then there are at least three edges in $G$, implying that $m\ge 3$. 
Also, Lemma~\ref{lem:upper} indicates that $m\le 2n-1=3$. Hence $m=3=2n-1$. Again by Lemma~\ref{lem:upper}, $F(G,\lambda)=(\lambda-1)(\lambda-2)$, a contradiction to the assumption of $G$.

Suppose $n=3$. Then every vertex in $G$ has degree at least three, implying $2m\ge 3n=9$. Thus $m\ge 5$. Also, Lemma~\ref{lem:upper}  indicates that $m\le 2n-1=5$. Hence $m=5=2n-1$. Then by Lemma~\ref{lem:upper}, $F(G,\lambda)=(\lambda-1)(\lambda-2)^2$, also a contradiction to the assumption of $G$.

Then the claim follows from Claim~\ref{cl2}.
\claimend

\Clm{cl6+1}{$n\ge 2k+1$.}

\proof
By Claims~\ref{cl3},~\ref{cl5} and~\ref{cl6}, 
we have $n\ge 4$ and 
$G$ is a $3$-edge-connected 
graph which has no proper $3$-edge-cut.
Then Lemma~\ref{prop:dong} 
implies that $n\ge 2k+1$.
\claimend

\Clm{cl7}
{
$k=0$.
}

\proof
Suppose that $k\geq2$. Since $n\ge 2k+1$ by Claim~\ref{cl6+1}, we have $\frac{4(k-1)^2}{n-2k}\ge 0$. Then~\eqref{eq:dong} indicates
\[
    m\geq 2n+2k-3+\frac{4(k-1)^2}{n-2k}\geq 2n+2k-3>2n-1, %\ge n+1,
\]
a contradiction to Lemma~\ref{lem:upper}.

Suppose that $k=1$. Then \eqref{eq:dong} indicates
\[
m\ge 2n+2-3=2n-1. %. \ge n+1.
\]
By Lemma~\ref{lem:upper}, we have $m=2n-1$  and 
$F(G,\lambda)=(\lambda-1)(\lambda-2)^{n-1}$,
a contradiction  to the assumption of $G$.

Hence $k=0$.
\claimend

\Clm{cl8}
{
$n=4$.
}

\proof
By Claim~\ref{cl3}, every
vertex in $G$ has degree at least three, while by Claim~\ref{cl7}, every
vertex in $G$ has degree at most three.
Hence $G$ is cubic and $m=3n/2$. Then~\eqref{eq:dong} indicates
\[
    \frac{3n}{2}\geq2n-3+\frac{4}{n},
\]
or equivalently,
\[
    (n-2)(n-4)\leq0.
\]
Thus $n=4$ follows from  Claim~\ref{cl6}.
\claimend

\Clm{cl9}
{$G$ has only integral flow roots.}

\proof
Now $G$ is a cubic, $3$-edge-connected and loopless
multigraph on four vertices. We shall show that $G$ is exactly $K_4$. Equivalently, we shall prove that $G$ contains no parallel edges.

Suppose that $G$ has parallel edges between vertices $u,v$ in $G$. Since $G$ is cubic, there are at most two edges between $\{u,v\}$ and $V(G)\setminus \{u,v\}$ in $G$, which indicates an edge-cut of size at most two in $G$, a contradiction to the fact that $G$ is $3$-edge-connected. 

Consequently, $G$ is  $K_4$, where
\[
    F(K_4,\lambda)=(\lambda-1)(\lambda-2)(\lambda-3).
\]
Hence the claim holds.
\claimend

Claim~\ref{cl9} contradicts the assumption of $G$. This completes the proof.
\qed

\medskip
\noindent\textbf{Declaration on the use of AI:}
During the preparation of this work, the authors used AI systems to assist in exploring potential approaches to the proof of Theorem~\ref{thm:main}. All AI-generated suggestions were
verified and refined by the authors, who take full responsibility for the correctness and originality of the paper.

\noindent {\bf Data availability statement}:  Not applicable.

\section*{Acknowledgment}
This work is supported by the National Natural Science Foundation of China (No. 12501498), the Natural Science Foundation of Xiamen, China (No. 3502Z202571026), and the Foundation for Cultivated Young Talents of Fujian Province, China (No. 2025350064).

\end{document}